\documentclass[12pt]{amsart}
\usepackage{amscd}
\usepackage[all]{xy}
\usepackage[notcite,notref]{showkeys}

 \newcommand{\Br}{\operatorname{Br}}

\newcommand{\rank}{{\operatorname{rank}}}

\newcommand{\CH}{\operatorname{CH}}

\newcommand{\Sym}{\operatorname{Sym}}

\newcommand{\Bl}{\operatorname{Bl}}

\newcommand{\Var}{\operatorname{Var}}

\newcommand{\C}{\mathbf{C}}
\renewcommand{\L}{\mathbf{L}}
\renewcommand{\P}{\mathbf{P}}
\newcommand{\N}{\mathbf{N}}
\newcommand{\Q}{\mathbf{Q}}
\newcommand{\Z}{\mathbf{Z}}
\newcommand{\un}{\mathbf{1}}
\newcommand{\A}{\mathbf{A}}

 \newcommand{\sC}{\mathcal{C}}
\newcommand{\sG}{\mathcal{G}}
\newcommand{\sM}{\mathcal{M}}

\newcommand{\sO}{\mathcal{O}}

 \newcommand{\sF}{\mathcal{F}}
\newcommand{\sA}{\mathcal{A}}

 \newcommand{\sS}{\mathcal{S}}

 \newcommand{\sT}{\mathcal{T}}

 \numberwithin{equation}{section}

\theoremstyle{plain}
\newtheorem{thm}[equation]{Theorem}
\newtheorem{prop}[equation]{Proposition}

\newtheorem{conj}[equation]{Conjecture}

\theoremstyle{definition}

\newtheorem{ex}[equation]{Example}
\newtheorem{rk}[equation]{Remark}

\begin{document}
 \title{Kuznetsov components and transcendental motives of cubic fourfolds}
 \author[C. Pedrini]{Claudio Pedrini}
\address{Dipartimento di Matematica \\ %
Universit\'a degli Studi di Genova \\ %
Via Dodecaneso 35 \\ %
16146 Genova \\ %
Italy}
\email{claudiopedrini4@gmail.com}

\begin{abstract} Let $X \subset \P^5_{\C}$ be a smooth cubic fourfold.The Kuznetsov component $\sA_X$ is contained in the derived category $D^b(X)$ and the transcendental  motive $t(X)$  is contained in  the category of Chow
motives $\sM_{rat}(\C))$. If $X$ and $Y$ are {\it Fourier -Mukai partners}  and hence the categories $\sA_X$ and $\sA_Y$ are equivalent,  then their transcendental motives  $t(X)$ and $t(Y)$ are isomorphic.
The aim of this note is to consider  families of special  cubic fourfolds $X$  with their FM-partners  $Y$ and to give an explicit description of the  isomorphism between the transcendental motives, in the case $X$ and $Y$ are rational and when they are conjecturally irrational. We also prove that ,for special cubic fourfolds $X $ in countably many Hassett divisors, with a symplectic automorphism of order 3,  there exists another special  cubic fourfold $Y$, an equivalence of categories $\sA^G_X \simeq \sA_{Y}$, where $\sA^G_X$ is the equivariant Kuznetsov component, and an isomorphism $t(X) \simeq t(Y)$. 

\end{abstract}
\maketitle 

\section{Introduction}

Let $X$ be a complex smooth cubic fourfold in $\P^5$. Let $A(X)=H^4(X,\Z)\cap H^{2,2}(X)$ be the lattice of algebraic cycles. The fourfold $X$ is {\it special} if  the lattice $A(X)$ contains a class  $v$ that is not homologous to $h^2$, where $h$ is the class of a hyperplane section. A fourfold $X$ is  {\it very general} if $\rank A(X) =1$. For $X$ special let $ K_d=<h^2,v>$, where $d$ is the discriminant of $<h^2,v>$.\par
 A polarized K3 surface  $(S,L)$ of  degree $d$ and genus $g$,  with $L^2= 2g-2$ and $2g-2 =d$, is  said  to be associated to a cubic fourfold $X \in \sC_d$ if there is an isomorphism of Hodge structures
\begin{equation} \label{K3} K^{\perp}_d \simeq H^2(S, \Z)_{prim}(-1). \end{equation}
Let $\sC$ be the moduli space of smooth projective cubic fourfolds and let  $\sC_d \subset \sC$ be the {\it Hassett divisor }  parametrizing  special  cubic fourfolds  with a labelling of discriminant d, i.e. a positive defined rank-two primitive sublattice $K_d=<h^2,v> \subset A(X)$.
 B. Hassett  in [Hass] proved  that $X \in \sC_d$ has an associated  polarized K3 surface  of degree $d$ if and only if $d$ satisfies the following numerical condition\par
(*) $d>6$ and  $d$ is not divisible by 4,9 or a prime $p \equiv 2 (3)$.\par
The results of several Authors (Hassett,Kuznetsov, Addington-Thomas) suggest
that a cubic fourfold is rational if and only if it has an associated
K3 surface. Kuznetsov conjectured that X has an associated K3 surface
if and only if there exists a semi-orthogonal decomposition of the derived category $D^b(X)$ of bounded complexes of coherent sheaves
\begin{equation}\label{Kuz} D^b(X)=< \sA_X, \sO_X(1),\sO_X(2)>\end{equation}
such that  the {\it  Kuznetsov  component } $\sA_X,$ is equivalent to the category $D^b(S)$,where $ S$ is K3 surface.The following result proved in [BLMNPS, Cor.29.7], shows that the two conditions are in fact equivalent
\begin{thm}\label{FMequiv} Let $X$ be cubic fourfold. Then $X$ has a Hodge-theoretically associated K3 surface if and only if there exists a smooth projective K3  surface $S$ and an equivalence $\sA_X \simeq D^b(S)$\end{thm}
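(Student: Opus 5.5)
The plan is to pass through the Mukai lattice of the Kuznetsov component, following Addington--Thomas, and to use the stability conditions on $\sA_X$ constructed in [BLMNPS]. Let $\widetilde H(\sA_X,\Z)$ be the lattice of topological K-theory classes of $\sA_X$, with its weight-two Hodge structure and Euler pairing. It carries two canonical classes $\lambda_1,\lambda_2$ spanning a sublattice $A_2$. The first step is the lattice statement, essentially due to Addington--Thomas. $X$ has a Hodge-theoretically associated K3 surface, i.e. $X\in\sC_d$ with $d$ satisfying $(*)$, if and only if $\widetilde H^{1,1}(\sA_X,\Z)$ contains a copy of the hyperbolic plane $U$. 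This reduces to Hassett's description of $K_d^\perp$ and Nikulin-type embedding arguments, and I take it as routine.

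For the easy direction, suppose $\sA_X\simeq D^b(S)$. The equivalence induces a Hodge isometry $\widetilde H(\sA_X,\Z)\simeq\widetilde H(S,\Z)$ with the Mukai lattice of $S$. The latter contains $U=\langle(1,0,0),(0,0,1)\rangle$ in its algebraic part, so the lattice criterion gives an associated K3 surface. Concretely, the orthogonal of $A_2$ plus an algebraic class recovers $K_d^\perp$, which is then isometric to a primitive piece of $H^2(S,\Z)(-1)$.

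The converse is the hard direction. Assume $U\subset\widetilde H^{1,1}(\sA_X,\Z)$, and choose a primitive isotropic vector $v\in U$ together with $w\in U$ with $v\cdot w=1$. I would use the family of Bridgeland stability conditions $\sigma$ on $\sA_X$ from [BLMNPS], which exists for every cubic and varies in families. Then I would take the moduli space $M_\sigma(v)$ of $\sigma$-stable objects with Mukai vector $v$, for $\sigma$ generic with respect to $v$. The key steps are as follows.

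\begin{enumerate}
\item Show that $M_\sigma(v)$ is nonempty and a smooth projective K3 surface $S$. Nonemptiness and projectivity come from the relative moduli theory in [BLMNPS]: deform $X$ within the locus where $v$ stays algebraic to a cubic where $\sA_X\simeq D^b(S_0)$ is already known (Kuznetsov's Pfaffian examples or the $\sC_8$ cubics with a plane). There, Mukai/Yoshioka theory for $v^2=0$ gives a K3 surface, and flatness and properness of the relative moduli space propagate this.
\item Show that the universal family, possibly twisted, gives a Fourier--Mukai functor $D^b(S,\alpha)\to\sA_X$. The class $w$ with $v\cdot w=1$ forces the Brauer class $\alpha$ to vanish, so a genuine universal object exists.
\item Show that this functor is fully faithful by Bondal--Orlov/Bridgeland criteria, using stability, $\mathrm{Ext}$ computations and Serre duality $S_{\sA_X}=[2]$. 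It is then an equivalence, since $\sA_X$ is connected and a CY2 category admits no nontrivial semi-orthogonal decompositions.
\end{enumerate}

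The main obstacle is step 1, namely non-emptiness and smoothness of $M_\sigma(v)$ for arbitrary $X$. This needs the full deformation machinery of stability conditions in families from [BLMNPS], rather than anything available in this paper. I would simply invoke it as in [BLMNPS, Cor.~29.7].
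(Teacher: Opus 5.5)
Your proposal is correct and follows essentially the paper's route. The paper gives no argument of its own and simply quotes [BLMNPS, Cor.~29.7]. Your outline is the argument behind that corollary: the Addington--Thomas criterion $U\subset\widetilde H^{1,1}(\sA_X,\Z)$, then the fine moduli space $M_\sigma(v)$ for an isotropic $v$ with $v\cdot w=1$, whose universal family gives a fully faithful and hence, by indecomposability of the CY2 category $\sA_X$, essentially surjective Fourier--Mukai functor. You rightly delegate the hard step, nonemptiness and projectivity of $M_\sigma(v)$, to [BLMNPS].

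Two small corrections. In the easy direction, the Hodge isometry requires the equivalence to be of Fourier--Mukai type; this is automatic by Orlov's representability theorem applied to $D^b(S)\simeq\sA_X\hookrightarrow D^b(X)$. Also, a general cubic containing a plane only gives $\sA_X\simeq D^b(S_0,\alpha_0)$ with $\alpha_0\neq 0$, so if you deform to such a point you need twisted Mukai--Yoshioka theory, not an untwisted $D^b(S_0)$.
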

If $d$ satisfies (*) the set of cubics $X \in \sC_d$ such that $\sA_X \simeq D^b(S)$ is Zariski open dense, see[AT,Thm.1.1].\par
These results leaded to the following Conjecture
\begin{conj} \label{rat}  A smooth cubic fourfold $X$ is rational if and only if $X \in \sC_d$  with $d$ satisfying the  numerical condition in (*) \par
\end{conj}
A recent result in  [KKPY,Thm. 6.8] proves that a very general cubic fourfold $X$,in the sense that the only rational Hodge classes on $X$ are the powers of the hyperplane class, is not rational.\par
The results by Hassett and Kuznetsov have been extended by  Huybrechts in [Huy 1],  showing that if  $X\in \sC_d$ with 
$$ d\equiv 0,2 (6)   \   and  \  n_i \equiv 0 (2)  \  for  \  all  \ primes \  p_i \equiv 2 (3)  \   with   \  2d=\prod p_i^{n_i}$$
\noindent then   there is a equivalence of categories
\begin{equation} \label{Brauer} \sA_X \simeq D^b(S,\alpha),\end{equation}
 \noindent with $S$ a K3 surface and $\alpha$ a Brauer class in $\Br(S)$. For $X \in \sC_8$ the K3 surface $S$ has degree 2 and $\alpha $ has order 2 while for $X \in \sC_{18}$  the degree of $S$ is 2 and the order of   $\alpha$  is 3.
A recent result by B.Hassett  describes a divisor in $\sC_{24}$ consisting of cubic fourfolds $X$ with twisted K3 surfaces $(S,\alpha)$ where $S$ has degree 6 and $\alpha $ has order 2. In all these cases the fourfolds are rational whenever $\alpha =0$.\par
 Let $\sM_{rat}(\C)$ be the (covariant) category of Chow motives. The motive $h(X)$ of a cubic fourfold $X$ has a {\it reduced Chow K\"unneth decomposition }
\begin{equation}\label{decomp} h(X) = \un \oplus \L \oplus (\L^2)^{\oplus \rho(X)} \oplus t(X) \oplus \L^3 \oplus \L^4,\end{equation}
\noindent where  $\L$ is the Lefschetz motive, $\rho(X) =\rank A^2(X)$, with $A^2(X)= CH^2(X)\otimes \Q$ and $t(X)$ is  the transcendental motive, see [BP]. Then
$$H^*(t(X)) = H^4_{tr}(X,\Q) =T(X)_{\Q},$$
where $T(X)\subset H^4(X.\Z)$   is the transcendental lattice and
$$A^i(t(X)) =0   \  for   \  i\ne 3  \  ;   \  A^3(t(X)) =A_1(X)_{hom} =A_1(X)_{alg},$$
Similarly if $S$ is a K3 surface then its motive $h(S)$ has a reduced Chow-K\"unneth  decomposition as follows
$$h(S)=\un \oplus \L^{\oplus \rho(S)}\oplus t_2(S) \oplus \L^2$$
\noindent where $\rho(S)$ is the rank of the Neron-Severi $NS(S)$, see [KMP].
  B\"ulles in  [Bull,3.1] proved that, if $X \in \sC_d$ with $d$ satisfying the following numerical condition 
$$(**)  \exists f,g \in \Z   \   with \   g\vert 2n^2+2n+2 \  ,  \ n \in \N  \  and   \ d =f^2g $$
\noindent  there is an equivalence of  motives
\begin{equation}\label{K3iso} t(X) \simeq t_2(S)(1),\end{equation} 
\noindent where $S$ is a K3 surface.\par
Two cubic fourfolds $X$ and $Y$ are {\it Fourier-Mukai partners} ( in short FM partners) if there exists an equivalence $\sA_X \simeq \sA_Y$  such th
$$D^b(X) \to \sA_X \simeq \sA_Y \to D^b(Y)$$
\noindent is a Fourier-Mukai transform.\par
L.Fu and Ch.Vial in [FV,Thm.1]proved that two smooth cubic fourfolds $X,Y$ with Fourier-Mukai equivalent  Kuznetsov components $\sA_X \simeq\sA_Y$ have isomorphic Chow motives. Therefore
\begin{equation} \label{Kuz} \sA_X \simeq \sA_Y  \Rightarrow t(X) \simeq t(Y) \end{equation}
The number of FM -partners of a cubic fourfold $X$, up to isomorphism, is finite and equals to 1 if  $X$ is not contained in any Hassett divisor $\sC_d$, see [Huy 2, 7.3.18].\par 
 D.Huybrechts in [Huy 2 ,Chapter 7] conjectured that if  two cubic fourfolds  are FM-partners then they are birational.  In all the known cases of cubic fourfolds $X,Y$, such that $\sA_X \simeq \sA_Y$, the fourfolds are in fact birational, see
  [BGvBM]. A counterexample to the converse is given by two general Pfaffian cubic fourfolds, i.e. two general cubic fourfolds in the
divisor $\sC_{14} $. In fact, any two Pfaffian cubic fourfolds are birational since they are both rational by but they are in general not FM partners, see [BLiB,Rk.3.7].\par
 Note that, until the results on the irrationality of cubic fourfolds proved in [KKPY] no pair of cubic  fourfolds were known to be birationally inequivalent. In [KKPY, Ex. 6.17] it is proved that a very general $X \in \sC_8$ is irrational. Therefore
$X$ is not  birational equivalent to a  fourfold $Y \in \sC_{14} $, which is known to be rational.\par
 The aim  of this note is to describe the isomorphism between the transcendental motives of  two FM partners  $X,Y$ belonging to a Hassett divisor $\sC_d$.\par
 In Sect.2 we consider  FM-partners $X,Y$, where $X,Y$ belong  either to the Hassett divisor $\sC_{12}$ or to $\sC_{42}$. In the first case the cubics are conjecturally irrational while in the second case they are both rational. We
 show that the the equivalence $\sA_X \simeq \sA_Y$ yields an isomorphism $t(X) \simeq $t(Y) which is in any case induced by an isomorphism between the transcendental motives of  K3 surfaces associated to $X$ and $Y$, as in
\ref{K3iso}. We also consider the case of cubic fourfolds $X,X'$ in $\sC_{12}$ such that the Fano varieties $F(X)$ and $F(X')$ are birational and describe the isomorphism btween $t(X)$ and $t(X')$.\par
  In Sect 3 we  describe the action of a finite group  $G$ of automorphisms on the Kuznetsov  component  $\sA_X$ and  give examples where $\sA^G_X$ is equivalent to the derived category of either a K3 surface or an abelian surface.
 We also prove that, for $X$ in countably  many Hasset divisors,  there is a cubic fourfold $Y$ such that
$$\sA^G_X \simeq \sA_Y   \  and   \ t(X) \simeq t(Y),$$
\noindent where $\sA^G_X $ is the equivariant Kuznetsov  component.
\section{Fourier-Mukai partners}
Let  $X$  be   a  very general   cubic fourfold  in a Hassett divisor  $ \sC_d$, i.e. such  that the lattice of algebraic cycles has rank 2.  If $d$ is not divisible to 9 then also a FM-partner $Y$ belongs to $\sC_d$ and is general  
since $T(X) \simeq T(Y)$, where  $T(X)$ and $T(Y)$ are the lattices of transcendental cycles,, see [FL,2.3].\par
 In the case of a  very general   cubic fourfold  $X \in \sC_d$  the number of FM-partners of  $X$ can be deduced from the following result (see [FL, Prop.2.6])
\begin{prop}\label{FM} Let $X$  be a very general cubic fourfold in $\sC_d$, where $d \ge 8$, $d \equiv 0,2(6)$ and is not divisible by 9. Let \par
$m =1$ if $d =2^a$;\par
$m=2^{k-1}$ if $d =2p_1\cdot....\cdot p_k$;\par
$m= 2^k$ if  $d =2^a p_1\cdot....\cdot p_k$.\par 
Here  $a \ge 2$ and $p_1,\cdots,p_k$ are distinct odd primes.Then \par
$ (1) If d\equiv 2(6) $, then  the number of Fourier-Mukai partners of $X $ equals $m$;\par
(2)  if $d \equiv 0(6)$  then  the number of Fourier-Mukai partners of $X $ equals $m/2$.
\end{prop}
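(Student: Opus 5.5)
The count rests on a derived Torelli theorem for Kuznetsov components together with lattice theory. By the categorical Torelli theorem for Kuznetsov components (Huybrechts; Bayer et al.), two cubic fourfolds $X,Y$ are FM partners if and only if there is a Hodge isometry $\widetilde H(\sA_X,\Z)\simeq \widetilde H(\sA_Y,\Z)$ of the Mukai lattices, which are of K3 type. For $X$ very general in $\sC_d$ we have $\rank A(X)=2$, so the transcendental lattice $T=T(X)$ has rank $21$. The primitive sublattice $N(\sA_X)=\widetilde H^{1,1}(\sA_X,\Z)$ is then of rank $3$, containing the $A_2$ lattice $\langle \lambda_1,\lambda_2\rangle$, and $T$ sits as $N^\perp$ in the unimodular Mukai lattice $\widetilde\Lambda\simeq U^{\oplus 4}\oplus E_8(-1)^{\oplus 2}$. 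Counting FM partners therefore means counting, up to isomorphism, the primitive embeddings $T\hookrightarrow \widetilde\Lambda$ whose orthogonal complement contains a copy of $A_2$ orthogonal to a class giving a genuine cubic. In other words, we count Hodge-compatible overlattices, as in Mukai--Orlov style counting for K3 surfaces.

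\textbf{Step 1: reduce to counting markings of the discriminant group.} Since $d$ is not divisible by $9$, the discriminant group $A_T=T^\vee/T$ is cyclic of order $d$; this is where the condition $9\nmid d$ enters, as noted above via [FL,2.3]. The lattice $N$ is then uniquely determined in its genus, and any FM partner $Y$ lies in $\sC_d$ with $T(Y)\simeq T$. By Nikulin's theory, the embeddings of $T$ with complement isometric to $N$ correspond to anti-isometries $A_T\simeq A_N$, taken modulo the actions of $O(N)$ and $O_{Hodge}(T)$. For very general $X$ we have $O_{Hodge}(T)=\{\pm 1\}$. The number of FM partners is thus
$$|O(A_T,q_T)|\,/\,|\mathrm{im}(O(N)\to O(A_N))\cdot\{\pm1\}|.$$

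\textbf{Step 2: compute $O(A_T,q)$ for $A_T\simeq \Z/d$.} The automorphisms of $\Z/d$ preserving $q$ are the multiplications by $x$ with $x^2\equiv 1 \pmod{2d}$, up to the appropriate modulus. I would use CRT on $2d=2^{a+1}\prod p_i$ or $2d=4\prod p_i$. Each odd prime contributes two square roots of $1$. The $2$-part contributes $1$, $2$ or $4$ solutions according to the power of $2$, and this produces $2^{k}$ or $2^{k+1}$ in the respective cases. Quotienting by $\pm1$ and by the image of $O(N)$ should then give $m$.

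\textbf{Step 3 (main obstacle): the image of $O(N)$ in $O(A_N)$ and the dichotomy $d\equiv 0$ versus $2 \pmod 6$.} I would compute $N$ explicitly as the saturation of $A_2\oplus\langle v\rangle$, with discriminant $d$. The point is whether $N$ admits an isometry acting by $-1$, or by a nontrivial root, on $A_N$ beyond $\pm1$. When $d\equiv 0 \pmod 6$, the lattice $N$ has an extra isometry; I expect it to come from the order $3$ symmetry of $A_2$ interacting with the $3$-part of $d$. This extra isometry identifies pairs of embeddings, which gives the halving $m/2$. When $d\equiv 2 \pmod 6$ it acts trivially, and the count is $m$. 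Verifying this image carefully, including the effect of the orientation and the choice of $\lambda_1,\lambda_2$ fixed by the cubic structure, is the delicate step. I would check it first on $d=8,12,14$ against the known counts. Finally, every such lattice-theoretic embedding should be realized by an actual cubic. This follows from surjectivity of the period map for cubic fourfolds (Laza, Looijenga), once one checks that the complement contains no short roots excluding it from the image; this holds because $d>6$.
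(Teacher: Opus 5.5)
The paper gives no proof of this proposition; it quotes it from [FL, Prop.~2.6]. Judged on its own, your argument has a genuine gap in Step~1, and it carries over into Step~3.

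\textbf{What goes wrong in Step~1.} All FM partners of $X$ share the \emph{same} Mukai Hodge lattice $\widetilde H(\sA_X,\Z)$. What distinguishes one partner from another is the cubic structure: the sublattice $A_2\subset N$, or equivalently the embedding $T\hookrightarrow H^4(Y,\Z)$ with $h^2$ in the complement.

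Your formula quotients $O(A_T)$ by $\{\pm1\}$ and by the image of the \emph{full} group $O(N)$. That forgets the $A_2$, so it only counts overlattices of $T\oplus N$, and there is exactly one. To see this, note that $N$ is even and indefinite of signature $(2,1)$. Since $A_N\cong\Z/d$ is cyclic, $\rank N=3=\ell(A_N)+2$. Nikulin's Theorem~1.14.2 then makes $O(N)\to O(A_N)$ surjective, so your expression equals $1$ for every $d$.

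This contradicts the statement. For $d=20$ one has $m=2$, and the paper's example in $\sC_{20}$ exhibits a non-isomorphic partner.

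\textbf{Why Step~3 cannot work either.} There is no ``extra'' isometry of $N$ that appears only when $d\equiv 0\pmod 6$, because all of $O(A_N)$ is already hit. Moreover, the order-$3$ rotations of $A_2$ act trivially on $A_{A_2}\cong\Z/3$, so they cannot be the source of the halving.

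\textbf{How to repair it.} Quotient by the stabilizer of the cubic structure, not by $O(N)$.

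First, FM partners are exactly the cubics $Y$ with $T(Y)\simeq T(X)$. Surjectivity of $O(N)\to O(A_N)$, together with uniqueness of $N$ in its genus, shows that every Hodge isometry $T(Y)\simeq T(X)$ extends to the Mukai lattices. Derived Torelli then gives the equivalence.

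Second, count these cubics. Use Voisin's Torelli theorem and Laza--Looijenga surjectivity (no obstruction, since $d>6$), together with uniqueness of the labelled lattice $K_d\ni h^2$. The count becomes
$O(A_T)/\langle \pm1,\ \mathrm{im}\,O(K_d,h^2)\rangle$,
where $O(K_d,h^2)$ is the group of isometries of $K_d$ fixing $h^2$, and $O(A_T)$ is abelian.

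Your Step~2 computation is correct: $|O(A_T)|=2^{k}$ for $d=2p_1\cdots p_k$, and $2^{k+1}$ when $4\mid d$. Dividing by $\pm1$ already gives $m$.

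The dichotomy comes from $O(K_d,h^2)$:
\begin{itemize}
\item For $d\equiv 2\pmod 6$, write $K_d=\langle h^2,v\rangle$ with $h^2\cdot v=1$. The only candidate nontrivial element is $v\mapsto \tfrac23 h^2-v$, which is not integral. So the group is trivial and the count is $m$.
\item For $d\equiv 0\pmod 6$, write $K_d=\langle h^2\rangle\oplus\langle w\rangle$. The map $w\mapsto -w$ acts on $A_{K_d}\cong\Z/3\oplus\Z/(d/3)$ as $(1,-1)\neq\pm1$. This halves the count, giving $m/2$.
\end{itemize}

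On the Mukai side, the same extra element is $\mathrm{id}_{A_2}\oplus(-1)$ on $N=A_2\oplus\langle w'\rangle$, which splits only when $d\equiv0\pmod 6$. When $d\equiv 2\pmod 6$, $A_2\oplus\langle w'\rangle$ has index $3$ in $N$. The gluing then forces the stabilizer of $A_2$ to act on $A_N$ only by $\pm1$.
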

\begin{ex}  If $X$ is very general in $\sC_{20} $ then $X$  contains a Veronese surface $V$ . The cubic $X$ has   one  FM partner $X' \in \sC_{20}$ not isomorphic to $X$ , containing a Veronese surface $V'$ and birational  to $X$
In this case, since $d=20$ does not satisfy the numerical condition in (**), the cubic fourfolds $X,X'$ have no associated K3 surfaces, in the motivic sense.The cubic $X'$ is  obtained as the image of $X$ under the Cremona transformation  $F_V$ of $\P^5_{\C} $ defined by the system of quadrics passing trough the Veronese surface $V$. In [FL]  it is proved that $\sA_X \simeq \sA_{X'}$. The restriction of $F_V$ to $X$ produces a birational map
$$f_V :X \dashrightarrow X'$$
 In this case the automorphism  $t(X) \simeq t(X')$ can be described as follows. Let 
$\pi : Y \to X$ and $\pi' : Y \to X'$  be the blow-ups at  $V$ and $V'$, respectively, see [FL,(3.2)]. By Manin's formula we get isomorphisms in $\sM_{rat}(\C)$
\begin{equation} \label{blow-up} h(Y) \simeq h(X) \oplus h(V)(1) \simeq h(X') \oplus h(V') (1)\end {equation}
The surfaces $V,V'$ being rational, their motives $h(V)$ and $h(V')$ have no transcendental parts. Therefore the isomorphism  in \ref{blow-up} induces, via a Chow-K\"unneth decompositions of $h(X)$ and $h(X')$ (see \ref{decomp}),
an isomorphism $t(X) \simeq t(X')$. In this case $X$ and $X'$ are conjecturally irrational.A similar result also holds for rational cubics in  $X,X' \in \sC_{20}$.Taking $X \in \sC_{20} \cap \sC_{38}$  then $X$ is rational and the Cremona transformation produces a FM-partner $X'$ lying in a codimension 2 subvariety of $\sC_{20} \cap \sC_{62}$,see [FL,Prop.3.15]. The fourfold $X'$, being birational to $X$ is rational.
\end{ex}
According to \ref{FM} a very general fourfold $X$ belonging either to  $\sC_{12}$ or to  $\sC_{42}$ has no FM-partners $Y$, with $Y \ne X$ . From  the results in  [BGvBM] there are families of cubic fourfolds  $X$, admitting an automorphism of order 2 or  3, belonging  either  to $\sC_{12}$  or  to $\sC_{42}$, with a large number of Fourier-Mukai partners $Y$. If $X,Y \in \sC_{12} $ they are conjecturally irrational while $X,Y$ are rational if they  belong to $\sC_{42}$.\par
Here we show that  in all these  cases  the   FM partners  $X,Y$ have associated K3 surfaces  $S, S'$ ( in the motivic sense)  and  
$$t(X) \simeq t_2(S)(1)  \   ;   \    t(Y) \simeq t_2(S')(1) \  with  \ t_2(S) \simeq t_2(S') $$
Therefore the isomorphism between the transcendental motives of $X$ and $Y$ is induced by an isomorphism of the transcendental motives of the associated K3 surfaces.\par
 Let $\sigma$ be  the involution on $\P^5$:
$$\sigma :  [x_0,x_1,x_2,x_3,x_4,x_5]  \to  [x_0,x_1,x_2,x_3,- x_4,- x_5] $$
\noindent and let   $\sF $ be the family of cubic fourfolds  invariant under the  symplectic involution induced by $\sigma$ . The family $\sF$  has dimension 12.  If $X \in \sF$ the  lattice   $A(X)_{prim} \simeq  E_8(2)$  has rank 8 . A fourfold $X \in \sF$  contains 120 pairs  of cubic scrolls $\{T_i,T^*_i\}$. The lattice $M$ generated by $\alpha_i = [T_i] -h^2$, where $h$ is the class of a hyperplane section, is isomorphic to $A(X)_{prim}$, see [Marq,4.9].  Therefore $X \in \sC_{12}$. The fourfold $X$ contains no planes and has no associated K3 surface, in the Hodge theoretical sense.\par
The fixed locus of $\sigma$ on $X$ consists of a line $l$ and a cubic surface $W$. Let $\tilde X = \Bl_l X$. Then $\pi : \tilde X \to \P^3$ is a conic bundle 
with a quintic degeneration locus $D$ which  has  a cubic and a quadric as irreducible components. The K3 surface $S$, which parametrizes irreducible components of degenerate conics in the fibration $\pi$, that are fixed by $\sigma$,
is a double cover of the cubic surface $W$ ramified along a degree 6 curve.There is an isomorphism of motives
$$t_2(S)(1) \simeq t(X),$$
\noindent see [BP, Rk.4.4]. \par
In [BGvBM, Prop.5.10] it  is proved that $X$ has 1120 FM-partners $Y$ such that $X$ and $Y$ are birational. Each of these cubics $Y$ has a non -symplectic involution $\tau$ which is induced
by an involution on $\P^5$ of the form 
$$\tau :  [x_0,x_1,x_2,x_3,x_4,x_5] \to  [x_0,x_1,x_2,x_3,x_4, -x_5],$$
The family $\sF'$ of cubic fourfolds which are invariant under the involution $\tau$ has dimension 14 and coincides with the family of cubic fourfolds with an Eckardt point.  If a  smooth cubic 4-fold $Y \subset \P^5$ contains an Eckardt point $p$ then we can choose coordinates $[x_0,\cdots, x_5] $ in $\P^5$ such that  $p=(0,0,0,0,0,1)$ and $Y$ is defined by an equation
$$  f(x_0,x_1,x_2,x_3,x_4) +  l(x_0,x_1,x_2,x_3,x_4) x^2_5=0.$$
\noindent where $f$ has degree 3 and  $ l(x_0,x_1,x_2,x_3,x_4)$ is a linear form. The cubic fourfold $Y$ contains a cone over the cubic surface $S =Y \cap H$, where $H$ is the hyperplane defined by $ l(x_0,x_1,x_2,x_3,x_4)=0$. The  cubic surface $S$ is isomorphic to $\P^2$ with 6 points blown up.Let $\bar F_0$ be the pull-back of a general line on $\P^2$.Denote the exceptional curves by $\bar F_1, \cdots,\bar F_6$ and let $F_0,F_1, \cdots, F_6$ be the cones over the corresponding curves on $S$ with vertex in $p$. In particular $F_1,\cdots ,F_6$ are planes in $Y$. The surfaces classes
$[F_0],[F_1],\cdots,[F_6]$ generate $A(Y) =H^4(Y,\Z) \cap H^{2,2}(Y)$ and are invariant under $\tau$. Moreover
$$h^2 =3[F_0]-[F_1]-\cdots -[F_6]. $$
 \noindent where $h$ is the class of a hyperplane section and 
$$  [F_0] \cdot [F_0] =7  \ , \  [F_0] \cdot [F_i] =3 \ , \  [F_i]\cdot F_i] =3 \ ,  \    [F_0] \cdot h^2 =3 $$ 
\noindent see [LZP, Lemma 2.4]. The rank two lattice $<h^2,[F_0]>$ has discriminant 12. Since $Y$  contains a plane we get $X \in \sC_8 \cap \sC_{12}$.\par 
According to Theorem 1.1 in [BBA] there are three irreducible components of $\sC_8 \cap \sC_{12}$ indexed by the value $P \cdot T = \epsilon \in (1,2,3)$,  where $ P \subset Y$  is a plane and $T$  the class
of a cubic  rational normal scroll (that  is   $T \cdot T =7 $ and  $T \cdot h^2 = 3$).Therefore $Y$ belongs to the irreducible component corresponding to $\epsilon =3$.  There is a K3 surface $S'$, double cover  of $\P^2$, ramified along a reduced sextic $C \cup L$, where $C$ is a quintic curve and $L$ a line, such that
$$t_2(S') (1) \simeq t(Y),$$
\noindent see [Ped, Prop.2.2]. Therefore $t_2(S) \simeq t_2(S')$ and $t(X) \simeq t(Y)$.\par
Both $X$ and $Y$ have no  Hodge theoretically associated K3 surfaces and are conjecturally irrational. \par
Next we consider the case of  FM-partners $X,Y$ that are rational and belong to $\sC_{42}$.\par
Let $\sG$  be the family of cubic fourfolds which are  invariant under a  order 3 symplectic  automorphism  
\begin{equation}  \label{order 3} \mu :  [x_0,x_1,x_2,x_3,x_4,x_5] \to [x_0,x_1,\zeta x_2,\zeta x_3,\zeta^2 x_4, \zeta^2 x_5], \end{equation}
\noindent where $\zeta$ is a cubic  root of 1. Every $X \in \sG$ has an equation of the form
$$F(x_0,x_1,x_2,x_3,x_4,x_5) = f_1(x_0,x_1) +f_2(x_2,x_3) +f_3(x_4,x_5) + \sum_{i,j,k} a_{ijk}x_i x_j x_k$$
\noindent where $f_1,f_2,f_3$  have degree 3 and $i =0,1; j= 2,3 ; k= 4,5$. The family $\sG$ has dimension 8. The lattice of algebraic cycle  $A(X)$ equals $<h^2> \oplus A(X)_{prim} $, where $A(X)_{prim}$ has rank 12.
A general $X \in \sG$ contains 378  families of cubic scrolls $\{T_i,T^*_i\}$ such that $[T_i] +[T^*_i]=2h^2$, where $h$ denotes the class of a hyperplane section.  Every $X \in \sG$ is rational and belongs to $\sC_{12} \cap \sC_{42}$, see
[BGM].\par
If $X \in \sG$ there are 623 Fourier-Mukai partners $Y$ of $X$, see  [BGvBM, 6 .Ex.2] . They all belong to $\sC_{42}$ and therefore are rational. Let
 
$$\Phi : \sF_{22} \dashrightarrow  \sC_{42}$$
\noindent  be the rational map of degree 2, with $\sF_{22}$ the moduli space of smooth polarised K3 surfaces $(S,H)$ of genus 22. Since both $X$ and its FM-partners  $Y$ belong to $\sC_{42}$ there are $S,S' \in  \sF_{22}$
such that
$$ \sA_X \simeq D^b(S)  \  ,  \  \sA_Y \simeq D^b(S')   \  ;  \  F(X) \simeq S^{[2]}  \ ,  \  F(Y) \simeq S'^{[2]}.$$
Therefore -
$$\sA_X \simeq \sA_Y \simeq D^b(S) \simeq D^b(S') .$$
The isomorphism $D^b(S) \simeq D^b(S')$ imples that $S$ and $S'$ have isomorphic Chow motives . Therefore $t_2(S) \simeq t_2(S')$ . Since  $F(X) \simeq S^{[2]}$ and $F(Y) \simeq S'^{[2]}$ there are isomorphisms of motives
$$t_2(S) (1) \simeq t(X)  \   ;   \  t_2(S')(1) \simeq  t(Y),$$
\noindent see [Ped, Lemma 5.3], which induces the isomorphism of transcendental motives  $t(X) \simeq t(Y)$.\par
The following  result gives a geometric description of the FM-partners $X$ and $Y$
\begin{prop} Let $X \in \sG$ and let $Y$ be a FM partner of $X$. The cubic fourfolds   $X$ and $Y$ belong to a   divisor $\sC_K \subset \sC_{18}$ and are rational. The cubics  $X$ and $Y$  contain  elliptic ruled surfaces $T \subset X$  and $T' \subset  Y$ such that the fibrations  in del Pezzo  surfaces of degree 6,  $\tilde X \to \P^2$ and $\tilde Y \to \P^2$, where  $\tilde X = \Bl_T(X) $ and  $\tilde Y = \Bl_{T'} (Y)$, have  a rational section.\end{prop}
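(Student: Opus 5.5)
We aim to put both $X$ and each FM partner $Y$ in $\sC_{18}$, and more precisely in a divisor $\sC_K$ cut out by a rank-three lattice $K=\langle h^2,v,w\rangle\subset A(X)$ containing a discriminant-18 sublattice. We then use the known geometry of $\sC_{18}$ (Addington--Hassett--Tschinkel--V\'arilly-Alvarado): a general $X\in\sC_{18}$ contains an elliptic ruled surface $T$ of degree 6, and $\Bl_T X\to\P^2$ is a fibration in sextic del Pezzo surfaces.

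\emph{Lattice step for $X$.} For $X\in\sG$, the group $A(X)_{prim}$ has rank 12 and contains the classes $[T_i]-h^2$ of the 378 cubic scrolls, which have discriminant 12. We look for a class $v\in A(X)$ with $v\cdot h^2=6$ and $v^2=18$, so that $\langle h^2,v\rangle$ has discriminant $3\cdot 18-36=18$. We try a combination such as $v=[T_i]+[T_j]$ or $v=[T_i]+[T_j^*]$, chosen so that $T_i\cdot T_j$ gives $v^2=18$. This realizes $v$ as the class of a degeneration of the elliptic ruled sextic. We then let $K$ be the saturation of $\langle h^2,[T_i],[T_j]\rangle$, so that $X\in\sC_K\subset\sC_{12}\cap\sC_{18}$.

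\emph{Transfer to $Y$.} An FM partner satisfies $\sA_X\simeq\sA_Y$. This induces a Hodge isometry of Mukai lattices $\widetilde H(\sA_X)\simeq\widetilde H(\sA_Y)$ sending the $A_2$-lattice $\langle\lambda_1,\lambda_2\rangle$ of one to that of the other. By Huybrechts' description, $A(X)$ and $A(Y)$ are the orthogonal complements of $A_2$ inside the algebraic parts, up to sign of the form. Hence $T(X)\simeq T(Y)$, and $Y$ has algebraic lattice $\langle h^2\rangle\oplus A(Y)_{prim}$ of the same genus. We need to show that this lattice again contains a rank-two labelling of discriminant 18, in the same rank-three configuration $K$.

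This is the main obstacle. The 623 partners are distinguished by different primitive embeddings, so a class of discriminant 18 on $X$ need not map to one on $Y$. We would first compute the discriminant form of the rank-13 lattice $A(X)$, which is determined by $T(X)$. We would then show that every overlattice/embedding of $A_2$ compatible with the discriminant form contains a primitive vector of discriminant 18 with respect to its own $h^2$. One route is to enumerate the finitely many embeddings of the $A_2$ lattice into the Mukai lattice used in [BGvBM]. Another is Nikulin-style arguments showing that lattices in this genus represent the required discriminant. Rationality then follows from membership in $\sC_{42}$, as already shown above.

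\emph{Geometric step.} Given $X,Y\in\sC_K$ general, we realize the class $v$ by an actual elliptic ruled surface $T\subset X$ (respectively $T'\subset Y$), using the AHTV result that a general member of $\sC_{18}$ contains one. Blowing up gives $\tilde X\to\P^2$, with generic fibre a sextic del Pezzo surface $D$ over $\C(\P^2)$. By Blunk/AHTV, $D$ is rational over the function field iff it has a rational point. That in turn holds iff the degree-2 and degree-3 Brauer obstructions vanish. These obstructions correspond to the existence of algebraic classes $w$ on $X$ with $w\cdot v$ and $w\cdot h^2$ of suitable parity and divisibility. We would take $w=[T_i]$, whose intersection with a fibre class is 1 or coprime to 2 and 3, and check this numerically inside $K$. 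This forces a multisection of coprime degrees, hence a rational section, and gives another proof that $X$ and $Y$ are rational.
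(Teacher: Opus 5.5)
\textbf{The rank-three lattice you choose cannot give the rational section.} Your discriminant-$18$ class is the right one, but the section class is missing, so the geometric step fails as stated.

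Write $\alpha=[T]-h^2$ for a cubic scroll $T$, so that $\alpha^2=4$ and $[T^*]-h^2=-\alpha$. Then your $v=[T_i]+[T_j^*]$ with $T_i\cdot T_j^*=2$ equals $2h^2+(\alpha_i-\alpha_j)$ with $\alpha_i\cdot\alpha_j=1$. This is exactly the discriminant-$18$ class in the paper.

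However, $K=\langle h^2,[T_i],[T_j]\rangle=\langle h^2,\alpha_i,\alpha_j\rangle$ is already saturated. An index-$3$ overlattice would have to contain $(h^2\pm(\alpha_i-\alpha_j))/3$, which would give a labelling of discriminant $2$, impossible on a smooth cubic. Every $w=zh^2+x\alpha_i+y\alpha_j\in K$ satisfies $w\cdot v=6z+3x-3y$. So every product of a class of $K$ with $v$, or with the fibre class $4h^2-v$, is divisible by $3$. Concretely, $[T_i]\cdot v=9$ and $[T_i]\cdot(4h^2-v)=3$. Your proposed $w=[T_i]$ therefore only gives multisections of degree divisible by $3$, which is exactly the degree that cannot remove the order-$3$ Brauer obstruction attached to $\sC_{18}$. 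The choice $v=[T_i]+[T_j]$ with $T_i\cdot T_j=2$ has the same defect.

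What is needed is a third class $w\perp h^2$ with $w\cdot(\alpha_i-\alpha_j)=\pm1$. The paper takes a third scroll class $\alpha_k$ with $\alpha_k\cdot\alpha_i=2$ and $\alpha_k\cdot\alpha_j=1$. Both this $\alpha_k$ and the pair $\alpha_i,\alpha_j$ with $\alpha_i\cdot\alpha_j=1$, whose existence you never check, are read off the explicit Gram matrix of $A(X)_{prim}$ in [BGvBM, Lemma 6.3]. The paper then sets $K_X=\langle h^2,\alpha_k,\alpha_i-\alpha_j\rangle$, with Gram matrix $\langle 3\rangle\oplus\begin{pmatrix}4&1\\1&6\end{pmatrix}$. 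This is AHTV's $K_{0,4}$, and $\alpha_k\cdot v=1$ supplies the section.

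\textbf{The transfer to $Y$ is left open.} You flag it as the main obstacle but only list possible routes. A genus argument will not settle it. The isometry $T(Y)\simeq T(X)$ fixes the genus of $A(Y)_{prim}$. Whether that lattice contains norm-$4$ vectors $\beta_l,\beta_m,\beta_n$ with the prescribed products depends on its isometry class, i.e.\ on the particular embedding attached to each of the $623$ partners. Local--global representation results do not guarantee representation of a target as small as $\begin{pmatrix}4&1\\1&6\end{pmatrix}$.

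The paper closes this step with the explicit Gram matrix $M'$ of $A(Y)_{prim}$, also from [BGvBM, Lemma 6.3]. It exhibits such $\beta_l,\beta_m,\beta_n$, so $K_Y\simeq K_X\simeq K_{0,4}$. Without that input, or an actual enumeration of the partners, your argument gives nothing for $Y$ beyond rationality via $\sC_{42}$. In particular it gives neither the elliptic ruled surface nor the rational section of $\tilde Y\to\P^2$.
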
 
\begin{proof} In [AHTVA,Sect .4]  the Authors describe  a countable dense set of divisors $\sC_{K _{a,b}}\subset \sC_{18}$ parametrizing rational cubic fourfolds. Each cubic  in $ Y \in \sC_{K _{a,b}}$ contains a elliptic ruled surface $T$. Let
$ r : \tilde Y \to Y$ be the blow-up of $Y$ along $T$ and let $S'$ be a smooth fibre of the fibration $\tilde Y \to \P^2$. Then there is a class $\Sigma \in H^4(Y,\Z) \cap H^{2,2}(Y)$ such that  $\Sigma \cdot S =1$, with $S = r(S')$. Therefore $Y$ is rational.\par
So we are left to show that for $X \in \sG$ and a FM -partner $Y$, there are integers $a,b$ such that $X, Y \in \sC_{K _{a,b}}\subset \sC_{18}$.Here  $K_{a,b}$ is a rank 
3 positive defined lattice having Gram matrix 
$$  
\begin{pmatrix} 
3&6&a \\
6 &18&1\\
a&1&b \\
\end{pmatrix}
$$
\noindent where $a\equiv  b (2)$ and   $a=(-1,0,1)$.The lattice $K_{a,b}$ contains\par
\noindent $<h^2>^{\perp}$,where $h$ is the class of a hyperplane section  and $K_{a,b}$ embeds in the lattice $L =H^4(Y,\Z)$.
The lattices  $K_{0,b}$ are isometric to the admissible lattices   contained in $L$ with Gram matrix 

\begin{equation} \label{rank 3} A_{2,1,b-2/2} =\begin{pmatrix}
3&0&0\\
0&b &1 \\
0&1& 6 
\end{pmatrix} \end{equation}
\noindent see [YY,Rk 8.17]..\par
Therefore  if   $A(X)$  and $A(Y)$  contain an admissible rank 3 lattice with Gram matrix as in \ref{rank 3} and $b \equiv 0 (2)$, then  the fourfolds $X$ and $Y$  belong  to $\sC_{K _{0,b}}$.\par
The primitive lattices $A(X)_{prim} $ and $A(Y)_{prim}$ have rank 12  with  generators  $\alpha_1, \cdots \alpha _{12} $ and  $\beta_1,\cdots,\beta_{12} $ . Their Gram matrices  $M$ and $M'$ are described in [BGvBM, Lemma 6.3]. 
In the case of a lattice  with Gram matrix $M$  there are classes $\alpha_k, \alpha_i, \alpha _j \in A(X)_{prim}$ such that $\alpha_i \cdot h^2 = \alpha_j \cdot h^2 = \alpha_k \cdot h^2=0$, 
$\alpha^2_i = \alpha^2_j =\alpha^2_k =4 $ and 
$$ \alpha_i \cdot \alpha_j = 1  \  ,   \    \alpha_k \cdot \alpha_i = 2   \  , \   \alpha_k \cdot \alpha_j = 1 $$
Similarly if the   Gram matrix is $M'$ there are classes  $\beta_l,\beta_m,  \beta _n  \in A(Y)_{prim}$, such that
$$ h^2 \cdot \beta_l=  h^2 \cdot \beta_m= h^2 \cdot \beta_n=0  \   ,  \  \beta^2_l=\beta^2_m=\beta^2_n= 4,$$
$$\beta_l \cdot \beta_m =1  \  ,  \beta_n \cdot \beta_l = 2   \ ,  \   \beta_n \cdot  \beta_m =1$$ 
Therefore in any case the lattices 
$$K_X=<h^2,\alpha_k, (\alpha_i-\alpha_j)>   \  ;   \  K_Y =<h^2, \beta_n , (\beta_l-\beta_m)>$$ 
\noindent  are contained in $A(X)$ and $A(Y)$,respectively, and  have a Gram matrix of the form
$$\begin{pmatrix}
 3&0&0 \\
0&4&1 \\ 
0&1& 6 
\end{pmatrix} $$
\noindent which coincides with $ A_{2,1,b-2/2}$ for $b =4$. 
\end{proof}
\begin{rk} ($\L$-equivalence) Two smooth varieties over field  $k$ are said to be $\L-$equivalent in $K_0(\Var_k)$ if, for some positive integer $r$,   
$$\L^r\cdot( |X|-|Y|) =0  \   in \      K_0(\Var_k),$$
\noindent where $\L =|\A^1_k|$. In  [MM] several examples are constructed of $\L$-equivalent  special cubic fourfolds $X,Y$ that are Fourier-Mukai partners. In these examples ( see [MM 4.7 and 4.8]) the  isomorphism between the transcendental motives  $t(X) $ and $t(Y)$ is induced by an isomorphism of the transcendental  motives of the associated K3 surfaces. \end{rk}

\subsection{Fano variety of lines} In [BFM] it is conjectured that, if $X,X'$ are Fourier-Mukai partners, then the derived categories of the Fano varieties  of lines $D^b(F(X))$ and $D^b(F(X'))$ are equivalent .This is  the case   for  $X,X'$  in the divisor $\sC_{546} $, see [BFM,Thm.7.1]. In   [KS,Thm.A] the Authors prove, using matrix factorization, that for every smooth $X$ there  is an isomorphism $\Sym^2\sA_X \simeq D^b(F(X)$.Therefore the equivalence of categories 
$\sA_X \simeq \sA_{X'} $ implies $D^b(F(X) )\simeq D^b(F(X'))$.\par
Since birationally equivalent  hyper-K\"alher manifolds have isomorphic Chow motives if  for  two smooth cubic fourfolds $X,X'$ the Fano varieties $F $ and $F'$ are birationally equivalent then $h(F) =h(F') $ in $\sM_{rat}(\C)$. 
The isomorphism  $h(F) =h(F') $ implies $t(X) \simeq t(X')$, see [BP]. \par
Examples of FM-partners $X,X'$ with birationally equivalent Fano  varieties of lines are  cubic fourfolds  containing a non-syzygetic pair of cubic scrolls, i.e cubics in $\sC_{12}$ containing two cubic scrolls $T_1,T_2$ such that
$T_1 \cdot T_2=1$, see [BFM, Sect.3]. If $X$ is such a cubic fourfold in $\sC_{12}$ then there  exists a Fourier-Mukai partner $X'$, birational to $X$ in $\sC_{12}$, also containing  a non-syzygetic pair of cubic scrolls, such that $F(X)$ is birational to $F(X')$. There are associated K3 surfaces $S$ and $S'$ (in the motivic sense), such that $t(X)\simeq t_2(S)(1)$ and $t(X') \simeq t_2(S')(1)$.  Therefore the isomorphism $t(X) \simeq t(X')$ implies that $t_2(S) \simeq t_2(S')$.\par 
On the other hand there are  examples of cubic fourfolds $X,X'$ in the Hassett divisor $\sC_{32}$ such that $F(X) $ is isomorphic to $F(X'$ but $X$ and $X'$ are not FM-partners, see [BFM, Thm.6.1].\par

\section{ Equivariant Kuznetsov component}
 Let $X$ be a cubic fourfold with a group action by a finite group $G$. Then the line bundle $\sO_X(1)$ and the semiorthogonal decomposition
$$D^b(X) =<\sA_X,\sO_X,\sO_X(1),\sO_X(2)>$$
\noindent are preserved under the group action of $G$. Hence we obtain the semiorthogonal decomposition
$$D^b_G(X) =<\sA^G_X,<\sO_X>^G,<\sO_X(1)>^G,<\sO_X(2)>^G>$$
\noindent of the equivariant derived category of $X$. The equivariant Kuznetsov component $\sA^G_X$ is defined as the orthogonal complement of 
$$< \ <\sO_X>^G ,<\sO_X(1)>^G,<\sO_X(2)>^G \ >,$$
\noindent see [FFM].\par
It is natural to ask whether $\sA^G_X$ is equivalent to the derived category  of a smooth variety. Here we describe families of cubic fourfolds  $X$ with a symplectic automorphism  $\sigma$ of prime order, such that
 $$\sA^G_X \simeq D^b(S),$$
\noindent where $G = <\sigma> $ and $S$ is either a K3 surface or an abelian surface.\par 
If $X$ is a general cubic fourfold with a symplectic involution $\sigma$. Then
$$\sA^G_X \simeq D^b(S),$$
\noindent where $G =<\sigma>$ and $S \subset F(X)$ is the K3 component of the fixed locus of $G$ on the Fano variety of lines $F(X)$, see [FFM]. We also have
$$t_2(S) (1) \simeq t(X),$$
\noindent see [Ped,  Prop. 2.6].\par
Let  $\sF$ be the family of cubic fourfolds invariant under the automorphisms of $\P^5$
$$\sigma: [x_0,x_1,x_2,x_3,x_4,x_5] \to :[x_0,x_1,x_2, \zeta x_3, \zeta x_4,\zeta x_5]$$
\noindent with $\zeta^3 = 1, \zeta \ne 1$. A cubic fourfold $X \in \sF $ has a an equation of the form 
 \begin{equation} \label{eq} F(x_0,x_1,x_2,x_3, x_4,x_5) =   f(x_0,x_1,x_2) +g(x_3,x_4,x_5)=0 \end{equation}
\noindent where $f$ and $g$ are homogeneous of degree 3. Let $Z\subset \P^3$ and $T\subset \P^3$ be the cubic surfaces defined by $f(x_0,x_1,x_2) -t^3 =0$ and $ g(x_3,x_4,x_5)-t^3 =0$. The plane $t=0$ cuts a smooth cubic curve $C\subset Z$ and a smooth cubic curve $D \subset T$. By [CT,Prop.1.2]  there is a rational map $\P^3 \times \P^3 \to \P^5$ which induces a rational dominant map $\psi : Z \times T \to X$ and whose locus of indeterminacy is  the
abelian surface $C\times D$. Let $\tilde X$ be the blow-up of $Z \times T$ at $C\times D$. The fourfold $X$ contains two disjoint planes $P_1$ and $P_2$, see [CT, Rk.2.4], hence it is rational.\par
 By  Manin's formula there is an isomorphism

$$h(\tilde X)\simeq h(Z\times T) \oplus h (C \times D)(1) .$$

\noindent The motive of the fourfold $Z \times T$  has no transcendental  part, since both the surfaces $Z$ and $T$ are rational. Therefore  the transcendental part  of $h(\tilde X)$  coincides with the transcendental motive $t_2(C \times D)(1)$. The map $\psi$ induces a finite morphism $\tilde \psi: \tilde X \to X$ and hence $h(X)$ is a direct summand of $h(\tilde X)$. It follows that  
$$t(X) \simeq t_2(C \times D) (1)$$
The quotient $X /G$, with $G =<\sigma>$, has  a  crepant resolution  $Y \to X/G$.Then $D^b(Y) \simeq D^b_G(X)$ and there is an isomorphism
$$\sA^G_X \simeq D^b(C \times D)$$
\noindent see [XH,Thm.5.8]. Since $X\in \sC_8$  and is rational there is a K3 surface $S$, double cover of $\P^2$, ramified along a sextic curve $C$, such that
$$t_2(S)(1) \simeq t(X)  \  and   \   \sA_X \simeq D^b(S).$$
Therefore  $t_2 (C \times D) \simeq t_2(S)$,  while $ D^b( C\times D) \ne D^b(S)$.\par
Next we consider special cubic fourfolds  $X$ in countably many Hassett divisors, with a group $G$ of symplectic automorphisms, such that $\sA^G_X \simeq D^b(\Sigma)$, where $\Sigma$ is a K3 surface.
 \begin{prop}   Let $S $  be a K3 surface with a polarization   $L$ of degree $L^2= 6d $ and genus $g$ ,where  $6d =2g -2$ and $g =n^2 +n +2 $. Assume that $S$ has a  symplectic automorphism $\sigma$ of order 3. Let $X \in \sC_{6d} $ be the image of $S$ under the  surjective rational map
 \begin{equation} \label{polarized} \sF_{6d} \dashrightarrow \sC_{6d}, \end{equation}
\noindent where $\sF_{6d}$ is the moduli space of K3 surfaces of degree $6d$.  If  $( n^2+n+1)/3 +1= m^2 +m+2$, with $m\ge 2$, there are  a cubic fourfold $Y \in \sC_{2d} $  and a K3 surface $\Sigma$, such that
$$\sA^G_X\simeq \sA_{Y}  \  ;  \    \sA^G_X \simeq  D^b(\Sigma)  \     and        \    t(X) \simeq t(Y) $$
\noindent where $G =<\sigma>$. \end{prop}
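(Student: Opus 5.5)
The plan is to transport the problem to the K3 side, where equivariant derived categories of symplectic quotients are well understood, and then return to cubic fourfolds through the rational map \ref{polarized}.

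Since $g=n^2+n+2$, the degree $6d=2(n^2+n+1)$ satisfies the numerical condition (*). By Theorem \ref{FMequiv} and [AT], the general $X\in\sC_{6d}$ in the image of \ref{polarized} then has $\sA_X\simeq D^b(S)$, and $F(X)\simeq S^{[2]}$ (or is birational to it). I would first check that the order $3$ automorphism $\sigma$ of $S$ preserving $L$ induces a symplectic automorphism of $X$. Concretely, $\sigma$ acts on $S^{[2]}$, hence on $F(X)$, preserving the Plücker polarization coming from $L$. An automorphism of $F(X)$ preserving this polarization comes from an automorphism of $X$. Symplecticity on $X$ follows from symplecticity on $S$, through the isometry $K_{6d}^{\perp}\simeq H^2(S,\Z)_{prim}(-1)$ of \ref{K3}. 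I would also choose the equivalence $\sA_X\simeq D^b(S)$ to be $G$-equivariant; this holds, for instance, if it is the Fourier--Mukai functor built from the universal family of lines. With that choice, $\sA_X^G\simeq D^b_G(S)$.

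Next I would take $\Sigma$ to be the minimal resolution of $S/G$. Because $\sigma$ is symplectic, $S/G$ has only $A_2$ singularities and $\Sigma$ is a K3 surface. The derived McKay correspondence (Bridgeland--King--Reid) then gives $D^b_G(S)\simeq D^b(\Sigma)$, and hence $\sA^G_X\simeq D^b(\Sigma)$.

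To produce $Y$, I would compute the degree of the polarization that $L$ induces on $\Sigma$. The $G$-invariant part of $L$ descends to a class $L'$ with $L'^2=L^2/3=2(n^2+n+1)/3=2d$. Hence $\Sigma$ has genus $g'$ with $2g'-2=2d$, that is, $g'=(n^2+n+1)/3+1$. The hypothesis $g'=m^2+m+2$ with $m\ge2$ says exactly that $2d=2(m^2+m+1)$ is again of the form covered by Hassett's condition (*) and by the map $\sF_{2d}\dashrightarrow\sC_{2d}$. Thus there is $Y\in\sC_{2d}$ with $\sA_Y\simeq D^b(\Sigma)$ and $F(Y)\simeq\Sigma^{[2]}$, which gives $\sA_X^G\simeq\sA_Y$.

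For the motives, [Ped, Lemma 5.3] gives $t(X)\simeq t_2(S)(1)$ and $t(Y)\simeq t_2(\Sigma)(1)$. It then remains to show $t_2(S)\simeq t_2(\Sigma)$. I would argue this way: Manin's blow-up formula shows that the exceptional curves of $\Sigma\to S/G$ are rational and contribute only copies of $\L$, so $t_2(\Sigma)\simeq t_2(S/G)=t_2(S)^G$. Since $\sigma$ is symplectic, it acts trivially on $T(S)_\Q$. Moreover $S$ has finite-dimensional motive (Kimura) in the relevant cases, or one can invoke Huybrechts' result that symplectic automorphisms of finite order act trivially on $CH_0(S)$. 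Either way $\sigma$ acts trivially on $t_2(S)$, so $t_2(S)^G=t_2(S)$.

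The main obstacle is the genericity and equivariance step, namely ensuring that $\sigma$ really descends to $X$ and that the equivalence $\sA_X\simeq D^b(S)$ can be made $G$-equivariant. The equivalence $\sA_X\simeq D^b(S)$ is only known on a Zariski open subset of $\sC_{6d}$, and K3 surfaces with an order $3$ symplectic automorphism form a locus of Picard rank at least $13$, which is not general. I would handle this by deforming inside the $G$-equivariant family and using the Kuznetsov–Addington–Thomas openness within that family. A second, smaller point is verifying that $L'$ is primitive of the stated degree; if it is not, $d$ must be reinterpreted accordingly.
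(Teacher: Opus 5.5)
Your proposal follows the paper's proof essentially step by step. The shared steps are: $\Sigma$ is the minimal resolution of $S/G$; $D^b_G(S)\simeq D^b(\Sigma)$ by the derived McKay correspondence (the paper cites [XH, Thm.~3.1]); the degree $2d$ of the polarization on $\Sigma$ is taken from [GM, Thm.~5.2] rather than computed; $Y$ is the image of $\Sigma$ in $\sC_{2d}$; and $t(X)\simeq t_2(S)(1)\simeq t_2(\Sigma)(1)\simeq t(Y)$ comes from [Ped, Lemma~5.3], birational invariance of $t_2$ and Huybrechts' theorem that symplectic automorphisms act trivially on $CH_0(S)$.

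The step you single out as the main obstacle is that $\sigma$ acts on $X$ and that $\sA_X\simeq D^b(S)$ can be chosen $G$-equivariantly, giving $\sA^G_X\simeq D^b_G(S)$. The paper simply asserts this ("Therefore we get"), so you are not missing anything it supplies. However, your suggested justification via the universal family of lines does not work as stated, because that kernel gives a functor out of $D^b(F(X))$, not out of $D^b(S)$. Likewise, the openness argument you propose concerns existence of the equivalence, not its equivariance. Existence for this special $X$ already follows from Theorem~\ref{FMequiv} together with Orlov's derived Torelli theorem for K3 surfaces.
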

\begin{proof} Let  $\Sigma $ be the K3 surface which is the  minimal desingularization of the quotient $S/\sigma$. The surface $S/\sigma$ has 6 singularities of type $A_2 $ and $\Sigma$ contains 12 irreducible curves, i.e. 6  disjoint pairs of rational curves meeting in a point. In the diagram
\begin{equation} \label{diagram}\CD
\tilde S@>{	\alpha}>>  S    \\
@VVV                  @V{\pi}VV  \\
\Sigma@>{\beta}>>S/\sigma    \endCD \end{equation}

\noindent the surface $\tilde S$ is the blow-up of $S$ at the six isolated fixed points $P_1,\cdots,P_6$ of $\sigma$. The automorphism $\sigma$ extends to an automorphism  of $\tilde S$ \and $\Sigma = \tilde S/\sigma$.\par
The family $\sS$ of polarized K3 surfaces with a symplectic automorphism of order 3 is the union of countably many components of dimension 7. Similarly the family $\sT$ of  K3 surfaces that are quotients of K3 surfaces with a symplectic automorphism of order 3 is the union of countably many components of dimension 7. The correspondence between K3 surfaces in $\sS$ and in $\sT$ has been described in [GM].
If  $S$ is  a K3 surface  with a polarization  $L$ of degree $L^2= 6d =2g -2 $, where $g =n^2 +n +2 $, then the desingularization $\Sigma$ of  $S/\sigma$ has a polarization $H$ of degree $2d$, see [GM,Thm.5.2].  
Therefore $H^2 = 2d = 2g' -2$, with $g'=( n^2+n+1)/3 +1$. Since  $g'=m^2 +m+2$  the   map $\phi$ in \ref{polarized} sits in a diagram
$$ \CD
\sF_{6d}@>{\phi}>>  \sC_{6d}    \\
@V{f_*}VV                  @.  \\
 \sF_{2d }@>{\phi}>>\sC_{2d}   \endCD $$
\noindent where $f_*$ is induced by the quotient map $f : S \to S/\sigma$.  The cubic fourfold $X =\phi(S)$ belongs to $\sC_{6d}$ and $F(X) \simeq S^{[2]}$. 
The order 3 automorphism $\sigma$ induces a symplectic automorphism $\sigma^{[2]}$  on $F(X)$ and   $\sA_X \simeq D^b(S)$. The image $Y =\phi(\Sigma)$ belongs to $\sC_{2d}$, $ F(Y) \simeq \Sigma^{[2]}$  
 and  $\sA_{Y} \simeq D^b(\Sigma)$.\par
Let $G= <\sigma>$ and let $D^b_G(S)$ be the derived category of $G$-equivariant coherent sheaves on $S$. Then
$ D^b(\Sigma) \simeq D^b_G(S),$
\noindent see [XH,Thm.3.1]. Therefore we get
$$  \sA^G_X \simeq \sA_{Y}  \   and     \       \sA^G_X \simeq  D^b(\Sigma) $$
 Since the transcendental motive $t_2(-)$ of a surface is a birational invariant the maps $\tilde S \to  S$   and $\tilde S \to \Sigma$    in \ref{diagram} induce a map $\theta : t_2(S )\to t_2(\Sigma)$ that is the projection onto
 a direct summand. Therefore 
$$  t_2(\Sigma)=t_2(S) \oplus N.$$
By a result of Huybrechts a symplectic automorphism acts trivially on the Chow group of 0-cycles  of a K3 surface . Since $A_*(t_2(\Sigma) =A_0(\Sigma)_{hom} $ we get $A_i(N)=0$, for all $ i$. Therefore
$N=0$,and  hence  $t_2(S) =t_2(\Sigma)$. Since $t(X) \simeq t_2(S)(1)$ and $t(Y) \simeq t_2(\Sigma)(1)$ we get $t(X) \simeq t(Y)$.
\end{proof}
\begin{ex} (1)  For  $ n=4$ we get  $d=7$ ,  $g= 22$ and $g' = 8$. Let $S $  be  a K3 surface of genus 22 and degree 42, equipped  with a symplectic automorphisms of order 3. There are two rational cubic fourfolds $X$ and $Y$, with $X \in \sC_{42}$ and $Y \in \sC_{14}$, associated to $S $ and to  the minimal resolution $\Sigma$ of $S/\sigma$, respectively, such that
$$\sA^G_X \simeq \sA_{Y}.$$
(2) For  $n=16$ we get $d=546 $ , $g =274$  and $g'=92$. Therefore if $S$ is a  polarized K3 surface of genus 274  with a symplectic automorphism of order 3  the minimal resolution $\Sigma$ of $S/\sigma$ is a polarized K3 surface of genus 92.  There are two  cubic fourfolds $X \in \sC_{546}$ and $Y \in \sC_{182}$ such that
$$\sA^G_{ X} \simeq \sA_{Y}$$
The cubic fourfold $X \in \sC_{546}$ has a unique FM-partner $X'$  and  $\sA^G_{ X'} \simeq \sA_{Y'}$, where $Y'$ is the unique FM-partner of $Y$ in $\sC_{182}$.\par
The foufolds $X,X' \in \sC_{542}$ are conjecturally birational, see [BFM,Thm. 7.1].  
\end{ex}

\end{document}